\newcommand{\CC}{{\mathbb C}}
\newcommand{\RR}{{\mathbb R}}
\newcommand{\Cont}{{\mathscr C}}
\newcommand{\NN}{{\mathbb N}}
\DeclareMathOperator{\Aut}{Aut}
\DeclareMathOperator{\Aff}{Aff}
\DeclareMathOperator{\SO}{SO}
\DeclareMathOperator{\GL}{GL}
\newtheorem{theorem}{\bf Theorem}
\newtheorem{lemma}{\bf Lemma}
\newtheorem{proposition}{\bf Proposition}
\newtheorem{corollary}{\bf Corollary}
\begin{document}

\title{Acyclic embeddings of open Riemann surfaces into new examples of elliptic manifolds}

\author{Tyson Ritter}
\address{Tyson Ritter, School of Mathematical Sciences, University of Adelaide, Adelaide SA 5005, Australia}
\email{tyson.ritter@adelaide.edu.au}

\subjclass[2010]{Primary 32Q40.  Secondary 32E10, 32H02, 32H35, 32M17, 32Q28.}

\date{1 July 2011.}

\keywords{Holomorphic embedding, Riemann surface, Oka manifold, Stein manifold, elliptic manifold, affine manifold.}

\begin{abstract}
The geometric notion of ellipticity for complex manifolds was introduced by Gromov in his seminal 1989 paper on the Oka principle, and is a sufficient condition for a manifold to be Oka. In the current paper we present contributions to three open questions involving elliptic and Oka manifolds. We show that quotients of $\CC^n$ by discrete groups of affine transformations are elliptic. Combined with an example of Margulis, this yields new examples of elliptic manifolds with free fundamental groups and vanishing higher homotopy. Finally we show that every open Riemann surface embeds acyclically into an elliptic manifold, giving a partial answer to a question of L\'arusson.
\end{abstract}

\maketitle

\section{Introduction}
\label{sec_introduction}

\noindent The field of Oka theory within complex geometry is a relatively new area of research that has undergone rapid development in recent years. With roots in the work of Oka in 1939 \cite{Oka:1939} and later extensions by Grauert \cite{Grauert:1957}, Gromov's seminal paper in 1989 \cite{Gromov:1989} set the stage for modern developments by Forstneri\v c, Prezelj, L\'arusson and others. For a detailed survey of the current state of Oka theory we refer the reader to the recent article \cite{Forstneric:2010} by Forstneri\v c and L\'arusson.

A complex manifold is said to be \emph{Oka} if it satisfies any of a number of equivalent conditions, all of which state in some way that the manifold has many holomorphic maps into it, from affine space $\CC^n$. In this sense, Oka manifolds can be thought of as being dual to Stein manifolds, which possess many maps from them into $\CC^n$. The simplest Oka condition is the \emph{convex approximation property}, which states that $M$ is Oka if every holomorphic map $K \to M$, where $K$ is a compact convex subset of $\CC^n$, can be approximated uniformly on $K$ by holomorphic maps $\CC^n \to M$. In \cite{Gromov:1989}, Gromov introduced a useful sufficient geometric condition for a manifold to be Oka that can often be verified in practice, called \emph{ellipticity}. A complex manifold $M$ is said to be \emph{elliptic} if there exists a holomorphic vector bundle $E \to M$ together with a holomorphic map $s : E \to M$ called a \emph{dominating spray}, such that $s(0_x) = x$ and $s|_{E_x}: E_x \to M$ is a submersion at $0_x$, for all $x \in M$.

Many questions exist relating to Oka and elliptic manifolds, and in this paper we make contributions to three open problems, which we now describe.

As mentioned above, Gromov showed that ellipticity is a sufficient condition for a manifold $M$ to be Oka, and if $M$ is Stein then it is also necessary \cite[3.2.A]{Gromov:1989} (see also \cite[Thm.\ 2]{Larusson:2005}). The question of whether all Oka manifolds are elliptic remains open. In Section \ref{sec_ellipticquotients} we give a sufficient condition for a quotient manifold of $\CC^n$ to be elliptic and show that quotients of $\CC^n$ by discrete groups of affine transformations satisfy this condition. Since quotients of $\CC^n$ are Oka, we give in this special case a positive answer to the question of whether Oka manifolds are elliptic. Note that it is not clear whether such quotients are Stein.

Despite the importance of elliptic manifolds, the list of known examples is relatively short \cite[Sec.\ 5]{Forstneric:2010}. In particular, it is of interest to know what possible homotopy types elliptic manifolds may have. In Section \ref{sec_newexamples} we apply the results of Section \ref{sec_ellipticquotients} to an example of Margulis to give new examples of elliptic manifolds as affine quotients of $\CC^3$. We then show that elliptic manifolds may have any free group of countable rank as fundamental group, with all higher homotopy groups vanishing. Applying a result of Baumslag and Roseblade on subgroups of the direct product of free groups, we conclude that there are continuum-many elliptic manifolds of distinct homotopy type.

In the holomorphic homotopy theory of L\'arusson \cite{Larusson:2003,Larusson:2004,Larusson:2005}, the question naturally arises whether every Stein manifold can be acyclically properly holomorphically embedded into an elliptic Stein manifold. We call a map between manifolds \emph{acyclic} if it is a homotopy equivalence. This question was answered affirmatively in \cite{Ritter:2011} for open Riemann surfaces with abelian fundamental group. In Section \ref{sec_acyclicembeddings} we use the new examples of elliptic manifolds from Section \ref{sec_newexamples} to give a partial answer to this question for one-dimensional Stein manifolds by showing that every open Riemann surface has an acyclic proper holomorphic embedding into an elliptic manifold.

I thank Finnur L\'arusson for helpful discussions during the preparation of this paper.

\section{Elliptic quotients of $\CC^n$}
\label{sec_ellipticquotients}

\noindent
As discussed in Section \ref{sec_introduction}, while it is known that elliptic manifolds are Oka, and that Stein Oka manifolds are elliptic, it remains an open question whether Oka manifolds are elliptic in general. This appears to be a difficult problem as there is no known way to construct a holomorphic vector bundle with dominating spray over an arbitrary Oka manifold.

In this section we restrict ourselves to considering quotient manifolds $M = \CC^n / \Gamma$ of Euclidean space $\CC^n$, where $\Gamma \subset \Aut(\CC^n)$ is a discrete group of holomorphic automorphisms of $\CC^n$ acting freely and properly discontinuously on $\CC^n$. The property of being Oka passes down from $\CC^n$ through the covering map \cite[Cor.\ 3.7]{Forstneric:2010}, making $M$ an Oka manifold. On the other hand, even though $\CC^n$ is elliptic, no general method is known for pushing ellipticity down to $M$ via the covering map. However, in the special case when $\Gamma$ is a group of affine automorphisms of $\CC^n$ we can show that $M$ is elliptic. 

\begin{theorem}
\label{thm_affinequotients}
Let $\Gamma \subset \Aut(\CC^n)$ be a discrete group of affine automorphisms of $\CC^n$ acting freely and properly discontinuously on $\CC^n$. Then the quotient manifold $M = \CC^n/\Gamma$ is elliptic.
\end{theorem}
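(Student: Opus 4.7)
The plan is to construct a dominating spray on $M$ by descending the obvious translation spray on $\CC^n$ through the covering map $\pi : \CC^n \to M$. Write each $\gamma \in \Gamma$ as $\gamma(x) = A_\gamma x + b_\gamma$ with $A_\gamma \in \GL(n, \CC)$ and $b_\gamma \in \CC^n$. On the cover, the trivial bundle $\CC^n \times \CC^n \to \CC^n$ carries the natural translation spray $s(x, v) = x + v$, which is dominating in the strongest possible sense: for each $x$, the restriction $v \mapsto x + v$ is a biholomorphism of the fiber to $\CC^n$.

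The key observation is that, although $\Gamma$ does not act linearly on $\CC^n$, its linear parts act canonically on the fibers: define a $\Gamma$-action on $\CC^n \times \CC^n$ by
\[
\gamma \cdot (x, v) = (\gamma(x), A_\gamma v) = (A_\gamma x + b_\gamma,\ A_\gamma v).
\]
This is a holomorphic left action (the cocycle $\gamma \mapsto A_\gamma$ is multiplicative), it is linear on fibers, and it makes the translation spray equivariant:
\[
s(\gamma \cdot (x,v)) = A_\gamma x + b_\gamma + A_\gamma v = \gamma(x+v) = \gamma \cdot s(x, v).
\]
First I would verify that this action on $\CC^n \times \CC^n$ is free and properly discontinuous. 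Freeness is immediate: if $(\gamma(x), A_\gamma v) = (x, v)$ then $\gamma(x) = x$, forcing $\gamma = \id$ by the hypothesis on $\Gamma$. For proper discontinuity, given a compact $K \subset \CC^n \times \CC^n$ with image $K_1 \subset \CC^n$ under the first projection, any $\gamma$ with $\gamma K \cap K \neq \emptyset$ must satisfy $\gamma(K_1) \cap K_1 \neq \emptyset$, which occurs for only finitely many $\gamma$ by assumption.

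Next I would let $E = (\CC^n \times \CC^n) / \Gamma$ be the quotient. Since the $\Gamma$-action is fiber-linear over the base action, $E$ inherits the structure of a holomorphic rank-$n$ vector bundle $p : E \to M$, with $\pi : \CC^n \times \CC^n \to E$ a holomorphic covering of total spaces intertwining the bundle projections. The zero section of $\CC^n \times \CC^n$ is $\Gamma$-invariant and descends to the zero section of $E$. By $\Gamma$-equivariance of the translation spray, $s$ descends to a holomorphic map $\bar s : E \to M$ with $\bar s(0_{[x]}) = [x]$.

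Finally I would verify the domination condition. For any $[x] \in M$, the restriction $\bar s|_{E_{[x]}}$ lifts, after choosing a representative $x \in \CC^n$, to the map $v \mapsto x + v$ on $\{x\} \times \CC^n$, which is a local biholomorphism at $0$. Since $\pi$ is a local biholomorphism of total spaces and of base, $\bar s|_{E_{[x]}}$ is a submersion (in fact a local biholomorphism) at $0_{[x]}$. Thus $(E, \bar s)$ is a dominating spray on $M$, proving ellipticity.

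The conceptual obstacle — and the reason this argument works for affine but not for general biholomorphic $\Gamma$ — is the need to produce a $\Gamma$-action on a bundle over $\CC^n$ that makes the translation spray equivariant; the affine hypothesis is exactly what provides a holomorphic linear lift $\gamma \mapsto A_\gamma$, and once this lift is in hand the rest is routine descent.
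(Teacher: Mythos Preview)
Your proof is correct and is essentially the same as the paper's: both descend the translation spray $s(x,v)=x+v$ through the covering by letting $\Gamma$ act on the fibres via its linear parts $A_\gamma$, the key computation being the equivariance $s(\gamma\cdot(x,v))=\gamma\cdot s(x,v)$. The only structural difference is that the paper first isolates a general criterion (Proposition~\ref{prop_ellipticquotients}) for quotients $\CC^n/\Gamma$ and then specializes, whereas you carry out the affine case directly; the underlying construction and verification are identical.
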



As it is not clear in general whether the quotient $M$ in Theorem \ref{thm_affinequotients} is Stein, we give a direct proof of ellipticity. In the interest of obtaining a more general result which may be of future relevance we first develop a sufficient condition for an arbitrary quotient manifold of the form $\CC^n/\Gamma$ to be elliptic. To this end, let $\Gamma \subset \Aut(\CC^n)$ be any discrete group of automorphisms of $\CC^n$ acting freely and properly discontinuously on $\CC^n$. The quotient $M = \CC^n / \Gamma$ is then a complex $n$-manifold with holomorphic covering map $\pi : \CC^n \to M$. We wish to construct a holomorphic vector bundle of rank $n$ over $M$ as a quotient $(\CC^n \times \CC^n)/\Gamma$ of the trivial vector bundle $\CC^n \times \CC^n$ over the universal cover $\CC^n$. To do so we will extend the action of $\Gamma$ to $\CC^n \times \CC^n$ by identifying copies of the vector bundle fibre $\CC^n$ over points in the same fibre of $\pi$ in such a way that we can produce from $\pi$ a well-defined dominating spray $(\CC^n \times \CC^n) / \Gamma \to M$.

Suppose we are given a holomorphic map $\sigma : \CC^n \times \CC^n \to \CC^n$ with the property that for each $z \in \CC^n$ the map $\sigma_z = \sigma(z, \cdot) : \CC^n \to \CC^n$ is an automorphism of $\CC^n$ satisfying $\sigma_z(0) = z$. Then for each $z \in \CC^n$ the composition $\pi \circ \sigma_z : \CC^n \to M$ satisfies $\pi \circ \sigma_z(0) = \pi(z)$ and is a submersion at $0 \in \CC^n$. We wish to construct an appropriate vector bundle $E \to M$ as a quotient of the trivial bundle $\CC^n \times \CC^n$ on which $\pi \circ \sigma$ is well-defined and then gives a dominating spray onto $M$. To achieve this, suppose $z, z' \in \CC^n$ are such that $\pi(z) = \pi(z')$, so that $z' = \gamma(z)$ for some $\gamma \in \Gamma$. The fibres over $z$ and $z'$ must then be identified so that the following diagram commutes:
\[\xymatrixrowsep{3pc}
\xymatrix{
\{z\}\times\CC^n \ar@{->}[rd]_{\pi \circ \sigma_z} \ar@{->}[rr]^{\lambda_\gamma}&&\{z'\}\times\CC^n  \ar@{->}[ld]^{\pi \circ \sigma_{z'}}\\
&M}
\]
where $\lambda : \Gamma \to \GL(n,\CC)$ is a homomorphism. For this diagram to commute we require $\pi \circ \sigma_z = \pi \circ \sigma_{z'} \circ \lambda_\gamma$, which is equivalent to
\[
	\sigma_{z'} \circ \lambda_\gamma \circ \sigma_z^{-1} \in \Gamma\,.
\]
However, the composition $\sigma_{z'} \circ \lambda_\gamma \circ \sigma_z^{-1}$ maps $z$ to $z'$, so by the freeness of the action of $\Gamma$ we must have
\[
	\sigma_{z'} \circ \lambda_\gamma \circ \sigma_z^{-1} = \gamma\,.
\]

From this discussion we see that, given $\sigma : \CC^n \times \CC^n \to \CC^n$ and $\lambda : \Gamma \to \GL(n, \CC)$ as above, we may extend the action of $\Gamma$ to $\CC^n \times \CC^n$ by the formula
\[
	\gamma \cdot (z,w) = (\gamma(z), \lambda_\gamma w)\,,
\]
where $\gamma \in \Gamma$ and $(z,w) \in \CC^n \times \CC^n$. The quotient $E = (\CC^n \times \CC^n) / \Gamma$ is then a holomorphic vector bundle over $M$. Note that $E$ is flat because $\lambda$ depends only on $\gamma \in \Gamma$, so that the transition functions for $E$ are locally constant. By construction, the map $\pi \circ \sigma$ descends to the quotient $E$ and gives a dominating spray $E \to M$. Thus $M$ is elliptic. We summarise this discussion by the following result.
\begin{proposition}
\label{prop_ellipticquotients}
Let $\Gamma \subset \Aut(\CC^n)$ be a discrete group of automorphisms acting freely and properly discontinuously on $\CC^n$. Suppose $\sigma : \CC^n \times \CC^n \to \CC^n$ is a holomorphic map such that for each $z \in \CC^n$ we have $\sigma_z = \sigma(z,\cdot) \in \Aut(\CC^n)$ and $\sigma_z(0) = z$, and $\lambda : \Gamma \to \GL(n,\CC)$ is a homomorphism. If for all $z \in \CC^n$ and all $\gamma \in \Gamma$ we have $\sigma_{\gamma(z)} \circ \lambda_\gamma \circ \sigma_z^{-1} = \gamma$, then $\CC^n / \Gamma$ is an elliptic manifold.
\end{proposition}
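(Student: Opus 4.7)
The proof plan is essentially to formalise the construction already informally carried out in the paragraphs preceding the proposition, verifying at each step that the ingredients behave as claimed.

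First I would verify that the formula $\gamma \cdot (z,w) = (\gamma(z), \lambda_\gamma w)$ defines a holomorphic group action of $\Gamma$ on $\CC^n \times \CC^n$. The fact that $\lambda$ is a homomorphism gives $(\gamma_1 \gamma_2) \cdot (z,w) = \gamma_1 \cdot (\gamma_2 \cdot (z,w))$, and holomorphicity is immediate since $\gamma \in \Aut(\CC^n)$ and $\lambda_\gamma \in \GL(n,\CC)$ acts linearly on the second factor. The action is free and properly discontinuous because its projection to the first factor already is, so the quotient $E = (\CC^n \times \CC^n)/\Gamma$ inherits the structure of a complex manifold. Since the action preserves the fibres of the first projection $\CC^n \times \CC^n \to \CC^n$ and is linear on each fibre, the map $E \to M$ induced by the first projection is a well-defined holomorphic vector bundle of rank $n$; its flatness follows because the transition cocycle is given by the constants $\lambda_\gamma$.

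Next I would show that the map $\pi \circ \sigma : \CC^n \times \CC^n \to M$ descends to a holomorphic map $s : E \to M$. This amounts to checking $\Gamma$-invariance: for $\gamma \in \Gamma$ and $(z,w) \in \CC^n \times \CC^n$, we need
\[
\pi \circ \sigma_{\gamma(z)}(\lambda_\gamma w) = \pi \circ \sigma_z(w).
\]
Applying $\pi$ to both sides of the hypothesis $\sigma_{\gamma(z)} \circ \lambda_\gamma \circ \sigma_z^{-1} = \gamma$ and using $\pi \circ \gamma = \pi$ yields exactly this identity (after substituting $w = \sigma_z^{-1}$ of a point and composing appropriately).

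Finally I would check that $s$ is a dominating spray. Restricted to the zero section of $E$, the map sends the class of $(z,0)$ to $\pi(\sigma_z(0)) = \pi(z)$, which corresponds under $E \to M$ to $\pi(z)$, so $s(0_x) = x$. On a fibre, $s|_{E_{\pi(z)}}$ is locally (in a neighbourhood of $z$ where $\pi$ trivialises) equal to $\pi \circ \sigma_z : \CC^n \to M$; since $\sigma_z \in \Aut(\CC^n)$ is a diffeomorphism and $\pi$ is a local biholomorphism, this restriction is a submersion at $0$, in fact a local biholomorphism. Hence $s : E \to M$ is a dominating spray and $M$ is elliptic.

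The main obstacle, if there is one, is the invariance computation for $\pi \circ \sigma$: one must be careful to check that the hypothesis $\sigma_{\gamma(z)} \circ \lambda_\gamma \circ \sigma_z^{-1} = \gamma$, which a priori is an identity of automorphisms of $\CC^n$ depending on $z$, gives genuine equivariance of $\pi \circ \sigma$ for every $(z,w)$. Rewriting the hypothesis as $\sigma_{\gamma(z)} \circ \lambda_\gamma = \gamma \circ \sigma_z$ and applying both sides to $w$ makes this transparent.
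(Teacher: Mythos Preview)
Your proposal is correct and follows precisely the approach of the paper: the proposition is stated as a summary of the discussion immediately preceding it, and your write-up simply fills in the verifications (group action, bundle structure, $\Gamma$-invariance of $\pi\circ\sigma$, and the dominating spray conditions) that the paper leaves implicit. Your final rewriting of the hypothesis as $\sigma_{\gamma(z)}\circ\lambda_\gamma=\gamma\circ\sigma_z$ is exactly the right way to make the invariance check transparent.
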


We may now prove Theorem \ref{thm_affinequotients}.

\begin{proof}[Proof of Theorem \ref{thm_affinequotients}]
Let $\Gamma \subset \Aff(\CC^n)$ be a discrete group of affine automorphisms of $\CC^n$. For $\gamma \in \Gamma$ we define $\lambda_\gamma = A$, where $\gamma(z) = Az + b$, $A \in \GL(n,\CC)$ and $b\in \CC^n$. Let $\sigma_z(w) = z + w$. We see that $\lambda$ and $\sigma$ satisfy the conditions of Proposition \ref{prop_ellipticquotients} by the following elementary calculation
\begin{eqnarray*}
	\sigma_{\gamma(z)} \circ \lambda_\gamma \circ \sigma_z^{-1}(w) &=& \sigma_{\gamma(z)} \circ \lambda_\gamma(-z + w) = \sigma_{\gamma(z)}(-Az + Aw) \\&=& -Az + Aw + Az + b = \gamma(w)\,.
\end{eqnarray*}
Therefore $M = \CC^n / \Gamma$ is elliptic.
\end{proof}

Note that in the proof of Theorem \ref{thm_affinequotients}, the homomorphism $\lambda : \Gamma \to \GL(n,\CC)$ is actually the derivative of the affine coordinate change maps for $M$, showing that $E$ is isomorphic to $TM$, the holomorphic tangent bundle of $M$.

\section{New examples of elliptic manifolds}
\label{sec_newexamples}

\noindent
In this section we apply the results of Section \ref{sec_ellipticquotients} in conjunction with an example of Margulis to construct a new example of an elliptic manifold $M$ with $\pi_1(M) \cong F_2$, the free group of rank two, with all higher homotopy groups trivial. By taking products and intermediate covering spaces we obtain a range of other elliptic manifolds with various fundamental groups and vanishing higher homotopy. In particular, given any $k \in \NN \cup \{\aleph_0\}$ there exists an elliptic manifold that is an Eilenberg-MacLane space of type $K(F_k,1)$.

In \cite{Margulis:1983,Margulis:1984} Margulis gave an example of a group $\Gamma \cong F_2$ acting freely and properly discontinuously on $\RR^3$ by affine transformations, providing a counterexample to the conjecture of Milnor that the fundamental group of a complete flat affine manifold is virtually polycyclic \cite{Milnor:1977}. In the example the group $\Gamma$ consists of affine maps each of whose linear part is a hyperbolic element of the group $\SO^0(2,1)$, the connected component of the identity in $\SO(2,1)$. As a result, the quotient manifold $L = \RR^3/\Gamma$ is a complete flat affine Lorentzian manifold with $\pi_1(L) \cong F_2$ and $\pi_n(L) = 0$ for $n > 1$. By the Auslander conjecture in three dimensions \cite{Fried:1983}, $L$ is non-compact.

We now consider the same group $\Gamma$ of affine transformations as automorphisms of $\CC^3$ in the obvious way, namely
\[
\gamma(z) = Az + b\,,\quad z \in \CC^3\,,
\]
where the action of $\gamma \in \Gamma$ on $\RR^3$ is given by $\gamma(x) = Ax + b$ for $x\in \RR^3, A \in \SO^0(2,1)$ and $b \in \RR^3$. It is easy to check that the action of $\Gamma$ on $\CC^3$ remains free and properly discontinuous. We thus obtain a non-compact complex quotient manifold $M = \CC^3/\Gamma$ with $\pi_1(M) \cong F_2$ and $\pi_n(M) = 0$ for $n > 1$. By Theorem \ref{thm_affinequotients}, $M$ is elliptic.

We mention that given a smooth affine manifold $Y$, there is a natural way to define a complex structure on the tangent bundle $TY$ so that $Y$, embedded as the zero-section, is a maximal totally real submanifold of $TY$ \cite{Shimizu:1985}. In this situation $TY$ is commonly called a \emph{complexification} of $Y$. Applying the construction in \cite{Shimizu:1985} to $L$ gives a complex manifold $TL$ that is naturally biholomorphic to $M$, and the biholomorphism restricts to the identity on $L$ under the natural inclusions of $L$ into $TL$ and $M$ respectively. By a result of Grauert \cite{Grauert:1958a} there exists a neighbourhood of $L$ within $M$ that is a Stein manifold, but it is not clear to me whether in fact $M$ itself is Stein, or if there is some other factor which limits the size of Stein neighbourhoods of $L$ within $M$.

By applying some basic properties of ellipticity we obtain a variety of new elliptic manifolds.

\begin{theorem}
\label{prop_freeexamples}
For all $k \in \NN \cup \{\aleph_0\}$ there exists a 3-dimensional elliptic manifold $S$ with $\pi_1(S) \cong F_k$ and $\pi_n(S) = 0$ for all $n>1$.
\end{theorem}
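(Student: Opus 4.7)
The plan is to obtain $S$ as an intermediate covering space of the elliptic manifold $M = \CC^3/\Gamma$ constructed in the preceding discussion, where $\Gamma \cong F_2$ is the Margulis group. The case $k=2$ is given by $M$ itself. For the remaining cases, the idea is that any subgroup $\Gamma' \subset \Gamma$ inherits the property of acting freely and properly discontinuously on $\CC^3$ by affine transformations, so Theorem \ref{thm_affinequotients} applies directly to $\CC^3/\Gamma'$.

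Concretely, for $k \in \NN \cup \{\aleph_0\}$ with $k \geq 1$, the Nielsen--Schreier theorem guarantees that $F_2$ contains a subgroup isomorphic to $F_k$; in fact $F_2$ contains $F_{\aleph_0}$ (for instance the subgroup generated by $\{a^n b a^{-n} : n \in \ZZ\}$), and any rank is realised as a subgroup of $F_{\aleph_0}$. Fix such a subgroup $\Gamma_k \subset \Gamma$ with $\Gamma_k \cong F_k$. Discreteness, freeness of the action, and proper discontinuity all descend to subgroups, and $\Gamma_k$ still consists of affine automorphisms of $\CC^3$, so Theorem \ref{thm_affinequotients} gives that $S := \CC^3/\Gamma_k$ is a $3$-dimensional elliptic manifold. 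Since $\CC^3$ is contractible, the covering map $\CC^3 \to S$ is the universal cover, so $S$ is a $K(F_k,1)$ and has the required homotopy groups. For $k = 0$ (if $\NN$ is taken to contain $0$), one simply takes $S = \CC^3$, which is $3$-dimensional, contractible, and elliptic via the trivial spray $\sigma(z,w) = z+w$.

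There is no real obstacle in this argument: everything reduces to Theorem \ref{thm_affinequotients} plus the elementary group-theoretic fact that every free group of countable rank embeds in $F_2$. The only point that might warrant mention is that we specifically need intermediate, rather than product, constructions in order to stay in dimension $3$; taking products with $M$ would give elliptic manifolds with fundamental group $F_2 \times \cdots \times F_2$, which is interesting for the subsequent homotopy-type count but does not preserve the dimension requirement of the present statement.
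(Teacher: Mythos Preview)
Your argument is correct and follows essentially the same route as the paper: pick a subgroup $\Gamma'\subset\Gamma\cong F_2$ isomorphic to $F_k$ and take $S=\CC^3/\Gamma'$. The only difference is in the justification of ellipticity: the paper observes that $S$ covers $M$ and invokes the general fact that ellipticity passes up through covering maps, whereas you reapply Theorem~\ref{thm_affinequotients} directly to the subgroup action. Both are valid; your version is self-contained within the paper, while the paper's version illustrates a general principle.
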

\begin{proof}
Given $k$ as above, let $\Gamma' \subset \Gamma\cong F_2$ be a subgroup isomorphic to $F_k$. Let $S = \CC^3 / \Gamma'$, then $S$ is a covering space of $M$. Using the fact that ellipticity passes up through covering maps \cite[Sec.\ 5]{Forstneric:2010} immediately yields the theorem.
\end{proof}

If we also use the fact that products of elliptic manifolds are elliptic, we obtain a larger collection of new elliptic manifolds as follows. Let $\Cont$ be the smallest collection of groups that contains $F_2$ and that is closed under the operations of taking subgroups and finite direct products of its members. Then for every $G \in \Cont$ there exists an elliptic manifold $S$ with $\pi_1(S) \cong G$ and vanishing higher homotopy groups. A result of Baumslag and Roseblade \cite{Baumslag:1984} on subgroups of $F_2 \times F_2$ then gives the following result.

\begin{corollary}
There exist continuum-many 6-dimensional elliptic manifolds of distinct homotopy type, in fact with mutually non-isomorphic fundamental groups and vanishing higher homotopy.
\end{corollary}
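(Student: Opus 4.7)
The plan is to package the three ingredients already developed in the excerpt: (i) the 3-dimensional elliptic Margulis manifold $M$ with $\pi_1(M)\cong F_2$ and higher homotopy vanishing, (ii) the stability of ellipticity under products and coverings, and (iii) a theorem of Baumslag--Roseblade supplying continuum-many non-isomorphic subgroups of $F_2\times F_2$. The group $F_2\times F_2$ lies in $\Cont$ as a finite direct product, and by the subgroup-closure of $\Cont$ every subgroup of $F_2\times F_2$ also lies in $\Cont$.

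Concretely, I would start from the 6-dimensional elliptic manifold $M\times M$, whose fundamental group is $F_2\times F_2$ and whose higher homotopy vanishes (so it is a $K(F_2\times F_2,1)$). For each subgroup $G\subset F_2\times F_2$, let $S_G\to M\times M$ be the covering space corresponding to $G$; then $S_G$ is a 6-dimensional complex manifold with $\pi_1(S_G)\cong G$, and since $M\times M$ is aspherical so is $S_G$, making $S_G$ a $K(G,1)$. Because ellipticity passes up through covering maps (as used in the proof of Theorem \ref{prop_freeexamples}), each $S_G$ is elliptic.

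Next I would invoke the Baumslag--Roseblade theorem \cite{Baumslag:1984}, which yields continuum-many pairwise non-isomorphic (finitely generated) subgroups of $F_2\times F_2$. Let $\{G_\alpha\}_{\alpha\in A}$ be such a family with $|A|=2^{\aleph_0}$, and form the corresponding family of elliptic manifolds $\{S_{G_\alpha}\}_{\alpha\in A}$.

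Finally, because each $S_{G_\alpha}$ is an Eilenberg--MacLane space $K(G_\alpha,1)$, a homotopy equivalence $S_{G_\alpha}\simeq S_{G_\beta}$ would induce an isomorphism $G_\alpha\cong G_\beta$; hence the $S_{G_\alpha}$ are of pairwise distinct homotopy type and have mutually non-isomorphic fundamental groups, proving the corollary. There is no serious obstacle here beyond correctly quoting Baumslag--Roseblade; the only point to be careful about is that the covering space of an aspherical manifold is aspherical, so that taking $\pi_1$ is a complete homotopy invariant for the family $\{S_{G_\alpha}\}$.
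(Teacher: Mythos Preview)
Your proposal is correct and follows essentially the same approach as the paper: the paper's argument is precisely to take subgroups of $F_2\times F_2$ (which all lie in $\Cont$), realise them as fundamental groups of covers of the 6-dimensional elliptic manifold $M\times M$, and invoke Baumslag--Roseblade to obtain continuum-many non-isomorphic such subgroups. Your write-up simply makes explicit the construction that the paper leaves implicit in the sentence preceding the corollary.
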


We mention however, the result in the same paper that any \emph{finitely presented} subgroup of the direct product of two free groups is a finite extension of a direct product of two free groups of finite rank.


We note that if it can be shown that $M$ is Stein then it would follow that all of the other elliptic manifolds discussed in this section are also Stein, since products of Stein manifolds are Stein and the property of being Stein passes up via covering maps.

\section{Acyclic embeddings of open Riemann surfaces}
\label{sec_acyclicembeddings}

\noindent
In this section we address a question which arises naturally in the holomorphic homotopy theory of L\'arusson \cite{Larusson:2003,Larusson:2004,Larusson:2005}, namely whether every Stein manifold can be acyclically embedded into an elliptic Stein manifold. We will take all embeddings to be both proper and holomorphic. This question appears very difficult to answer in general, so we restrict ourselves to considering acyclic embeddings of 1-dimensional Stein manifolds, namely open Riemann surfaces.

In \cite{Ritter:2011} it was shown that all open Riemann surfaces with abelian fundamental group acyclically embed into a 2-dimensional elliptic Stein manifold (either $\CC^2$ or $\CC\times\CC^*$ depending on the homotopy type of the Riemann surface). In this section we extend this result to show that every open Riemann surface embeds acyclically into an elliptic manifold. Unfortunately we have not been able to determine so far whether the elliptic targets are Stein. We begin with the following lemma.

\begin{lemma}
\label{lem_homotopicembedding}
Let $f : X \to S \times Z$ be a continuous map where $X$ is a Stein manifold, $S$ is an elliptic manifold and $Z$ is a contractible complex manifold. If $X$ has an embedding $\phi : X \to Z$ then $f$ is homotopic to an embedding $\tilde{f} : X \to S \times Z$.
\end{lemma}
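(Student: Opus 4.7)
The plan is to decompose the target and use the Oka principle in the first factor while using contractibility in the second. Write $f = (f_1, f_2)$ where $f_1 : X \to S$ and $f_2 : X \to Z$ are the coordinate maps. Since $S$ is elliptic it is Oka, and $X$ is Stein, so by the basic Oka principle for maps $X \to S$ the continuous map $f_1$ is homotopic to a holomorphic map $g_1 : X \to S$. Independently, $Z$ is contractible, so any two continuous maps into $Z$ are homotopic; in particular $f_2$ is homotopic to the holomorphic embedding $\phi : X \to Z$.

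Concatenating these two homotopies (varying one coordinate at a time while keeping the other fixed) gives a continuous homotopy from $f = (f_1, f_2)$ to the holomorphic map
\[
\tilde f = (g_1, \phi) : X \to S \times Z.
\]
It remains to verify that $\tilde f$ is a proper holomorphic embedding in the sense of the paper. Injectivity and immersivity of $\tilde f$ follow at once from injectivity and immersivity of $\phi$: if $\tilde f(x) = \tilde f(y)$ then $\phi(x) = \phi(y)$, and the derivative of $\tilde f$ contains the derivative of $\phi$ as a direct summand, hence is injective. For properness, if $K \subset S \times Z$ is compact and $K \subset K_1 \times K_2$ with $K_2 \subset Z$ compact, then $\tilde f^{-1}(K) \subset \phi^{-1}(K_2)$, which is compact because $\phi$ is proper.

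Essentially no difficulty arises beyond invoking the Oka principle; the only thing to watch is that the two separate homotopies in the factors combine into a single continuous homotopy from $f$ to $\tilde f$, which is immediate since one can perform them sequentially. The main point of the lemma is thus the observation that properness, injectivity, and immersivity of the combined map are inherited from the embedding $\phi$ in the contractible factor $Z$, regardless of the behaviour of $g_1$ in the elliptic factor $S$.
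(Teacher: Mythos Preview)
Your proof is correct and follows essentially the same approach as the paper: apply the Oka principle to the $S$-component and contractibility to the $Z$-component, then observe that $(g_1,\phi)$ is an embedding because $\phi$ is. Your version is in fact a bit more explicit than the paper's, spelling out injectivity, immersivity, and properness of $\tilde f$ where the paper simply notes that $\tilde f$ is an embedding since $\phi$ is.
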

\begin{proof}
Let $\pi_S : S \times Z \to S$ and $\pi_Z : S \times Z \to Z$ be projections onto the first and second components of $S \times Z$ respectively. By Gromov's Oka principle \cite{Gromov:1989}, the continuous map $\pi_S \circ f : X \to S$ is homotopic to a holomorphic map $\psi : X \to S$. Since $Z$ is contractible, the maps $\pi_Z \circ f : X \to Z$ and $\phi : X \to Z$ are homotopic. Consequently, $f = (\pi_S \circ f, \pi_Z \circ f)$ is homotopic to the map $\tilde{f} = (\psi, \phi)$, which is an embedding since $\phi : X \to Z$ is an embedding.
\end{proof}

Using this lemma and the elliptic manifolds from Theorem \ref{prop_freeexamples} we may prove our main result.

\begin{theorem}
Let $X$ be an open Riemann surface. Then $X$ can be acyclically embedded into an elliptic manifold.
\end{theorem}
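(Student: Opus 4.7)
The plan is to assemble Theorem~\ref{prop_freeexamples} with Lemma~\ref{lem_homotopicembedding} and a classical proper holomorphic embedding of $X$ into Euclidean space. The key structural observation is that every open Riemann surface $X$ is Stein (Behnke--Stein) and homotopy equivalent to a bouquet of circles, hence is a $K(F_k, 1)$ where $k \in \{0\} \cup \NN \cup \{\aleph_0\}$ is the rank of the (free) fundamental group $\pi_1(X)$.

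For $k = 0$, the surface $X$ is contractible, and any proper holomorphic embedding $X \hookrightarrow \CC^3$ (which exists by the classical Bishop--Narasimhan embedding theorem) is automatically an acyclic embedding into the elliptic manifold $\CC^3$.

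For $k \geq 1$, let $S$ be the $3$-dimensional elliptic manifold furnished by Theorem~\ref{prop_freeexamples} with $\pi_1(S) \cong F_k$ and vanishing higher homotopy, so that $S$ is itself a $K(F_k,1)$. Fix a proper holomorphic embedding $\phi : X \to \CC^3$. Since $X$ and $S$ are both aspherical with isomorphic fundamental groups, one can realize any chosen isomorphism $\pi_1(X) \to \pi_1(S)$ by a continuous map $g : X \to S$; by Whitehead's theorem (applied to the CW homotopy types of $X$ and $S$) this $g$ is a homotopy equivalence. Form the continuous map $f = (g, \phi) : X \to S \times \CC^3$. Because $\CC^3$ is contractible and $\pi_S \circ f = g$, the projection $\pi_S$ is a homotopy equivalence and so is $f$.

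Finally, apply Lemma~\ref{lem_homotopicembedding} with $Z = \CC^3$ to replace $f$ by a holomorphic embedding $\tilde{f} = (\psi, \phi) : X \to S \times \CC^3$ homotopic to $f$. Then $\tilde{f}$ is a homotopy equivalence (being homotopic to one), is proper since $\phi$ is, and lands in the elliptic manifold $S \times \CC^3$ (products of elliptic manifolds are elliptic). The only delicate point is the purely topological construction of the homotopy equivalence $g : X \to S$; the analytic content is wholly supplied by Theorem~\ref{prop_freeexamples}, Lemma~\ref{lem_homotopicembedding}, and the existence of proper embeddings of open Riemann surfaces into $\CC^3$.
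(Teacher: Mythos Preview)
Your argument is correct and follows essentially the same route as the paper: choose an elliptic $K(F_k,1)$ target $S$ via Theorem~\ref{prop_freeexamples}, realize an isomorphism $\pi_1(X)\to\pi_1(S)$ by a continuous map $g$, and then invoke Lemma~\ref{lem_homotopicembedding} with $Z=\CC^3$ and a proper holomorphic embedding $\phi:X\to\CC^3$ to upgrade $(g,\phi)$ to an acyclic proper holomorphic embedding into the elliptic product $S\times\CC^3$. The only differences from the paper are cosmetic: you separate out the contractible case $k=0$ and start from $(g,\phi)$ rather than $(g,0)$, and you spell out explicitly that $\tilde f$ is proper and that $S\times\CC^3$ is elliptic.
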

\begin{proof}
It is well known that the fundamental group of an open Riemann surface $X$ is isomorphic to a free group of rank $k$ for some $k \in \NN \cup \{\aleph_0\}$. Using Theorem \ref{prop_freeexamples} we let $S$ be an elliptic manifold with $\pi_1(S) \cong F_k$, and vanishing higher homotopy groups. Since $X$ and $S$ are both Eilenberg-MacLane spaces of type $K(F_k,1)$, there exists a continuous map $g : X \to S$ which induces the identity homomorphism between the fundamental groups of the two spaces and is thus acyclic \cite[Thm.\ 7.26]{Davis:2001}.

As $X$ is a 1-dimensional Stein manifold there exists an embedding $X \to \CC^3$. By Lemma \ref{lem_homotopicembedding}, the map $f = (g, 0) : X \to S \times \CC^3$ is then homotopic to an embedding $\tilde{f} : X \to S \times \CC^3$. Since $g$ is acyclic, so too is $\tilde{f}$ and the theorem is proved.
\end{proof}

In the above proof, if it could be shown that $S$ is Stein then we would have the stronger result that every 1-dimensional Stein manifold can be acyclically embedded into an elliptic Stein manifold. As mentioned earlier, this was proved in \cite{Ritter:2011} for Riemann surfaces with abelian fundamental group. However, in that paper the target space was 2-dimensional, while in the current paper our targets have dimension 6. It is interesting to ask whether the dimension of our target could be reduced, but it is not clear how this might be achieved. In the current situation we could at best hope to reduce the target dimension to 3, if it were possible to acyclically embed directly into the elliptic manifold $S$.


\end{document}